\newtheorem{teo}{Theorem}[section]
\newtheorem{lem}[teo]{Lemma}
\newtheorem{rem}[teo]{Remark}
\def\a{\mathcal A}
\def\wa{\widetilde{\a}}
\begin{document}

\title{\vspace*{-2cm}\sc The case of equality in  H\"older's  inequality for matrices and operators\footnote{2010 MSC.  Primary 15A45, 47A30;  Secondary 47A63.}}

% 15A42 Linear and multilinear algebra; matrix theory --  Inequalities involving eigenvalues and eigenvectors

% 15A45 Linear and multilinear algebra; matrix theory  -- Miscellaneous inequalities involving matrices

% 47A30  Operator Theory -- Norms (inequalities, more than one norm, etc.)

% 47A63 Operator Theory --  Operator inequalities

\date{}
\author{G. Larotonda\footnote{Instituto Argentino de Matem\'atica (CONICET), Universidad Nacional de General Sarmiento, Universidad de Buenos Aires. e-mail: glaroton@dm.uba.ar}}

\maketitle

\setlength{\parindent}{0cm} %% para que no indente los parrafos nuevos

\vspace*{-1cm}\begin{abstract}
Let $p>1$ and $1/p+1/q=1$. Consider H\"older's inequality 
$$
\|ab^*\|_1\le \|a\|_p\|b\|_q
$$
for the $p$-norms of some trace ($a,b$ are matrices, compact operators, elements of a finite $C^*$-algebra or a semi-finite von Neumann algebra). This note contains a simple proof (based on the case $p=2$) of the fact that equality holds iff $|a|^p=\lambda |b|^q$ for some $\lambda\ge 0$.
\end{abstract}

\section{Introduction}\label{intro1}

The purpose of this note is to give a geometrical (and simple) proof of the fact that equality holds in H\"older's inequality for the $p$-norms of matrices or operators $a,b$ if and only if $|a|^p=\lambda|b|^q$ for a precise $\lambda\ge 0$. 

\medskip

A first proof of this result for $a,b\ge 0$ goes back to Dixmier \cite{dix} where he first shows that $a,b$ should commute and then via the spectral theorem he reduces the problem to the equality in the classical H\"older inequality. Yet another different proof is based on the $s$-numbers of operators and majorization theory, i.e. the proof given by M. Manjegani in \cite{m}; that proof depends on the solution of the case of equality in Young's inequality  for nuclear operators which was given in \cite{af} (for the case of equality for the singular values of compact operators, see  \cite{lar}).

\medskip

Let $\a=M_n(\mathbb C)$ or with more generality, any semi-finite von Neumann algebra with semi-finite faithful normal trace $\tau$. In our discussion, we  include $C^*$-algebras with a finite trace, because they can be embedded into its double commutant which is a finite von Neumann algebra by a classical result of Takesaki \cite[Proposition V.3.19]{t}. Moreover, in the semi-finite case, the argument works without modification for unbounded $a,b\in \wa$, the algebra of $\tau$-measurable operators affiliated with $\a$ (see  Nelson's paper \cite{n}). 

\subsection{Notation and the Cauchy-Schwarz inequality}

We will denote the $p$-norms with $\|x\|_p=(\tau|x|^p)^{1/p}$ for $p\ge 1$, and  $\|x\|$ will denote the norm of $x$ in the algebra $\a$. What follows is the well-known H\"older inequality for $a,b\in\a$:
$$
\|ab^*\|_1\le \|a\|_p\|b\|_q.
$$
Note that in particular $\tau(|a|^p) ,\tau(|b|^q)<\infty$ implies $|\tau(ab)|\le \|ab\|_1<\infty$. For a proof of this inequality for $a,b\in M_n(\mathbb C)$, see Bhatia's book \cite{bhatia}. For compact operators the standard reference is the book of Simon \cite{simon}, and for the continuous case, see Nelson's paper \cite{n} on noncommutative integration.

\medskip

Since the main result of this note is based on it, let us start by recalling the well-known Cauchy-Schwarz inequality  with precision; for a proof see Proposition 2.1.3 in Kadison and Ringrose's book \cite{kr}.

\begin{lem}\label{cs}
Let $x,y\in \a$ and set $\langle x,y\rangle =\tau(xy^*)$, $\|\cdot\|_2=\sqrt{\langle x,x\rangle}$. Then 
 $$
| \langle x,y\rangle| \le \|x\|_2\|y\|_2.
 $$
Moreover, if $\tau(xy^*)=\|x\|_2\|y\|_2$, then $x=\lambda y$ for some $\lambda\ge 0$.
\end{lem}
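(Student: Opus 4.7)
The plan is to follow the standard inner-product-space proof of Cauchy--Schwarz; the only non-trivial ingredient is the faithfulness of $\tau$, which enters in the equality case. Since $\tau$ is positive, $\langle z,z\rangle = \tau(zz^*) = \|z\|_2^2 \ge 0$, so $\langle\cdot,\cdot\rangle$ is a positive semidefinite sesquilinear form on the subspace where $\|\cdot\|_2$ is finite (and traciality together with $*$-invariance of $\tau$ give $\langle y,x\rangle = \overline{\langle x,y\rangle}$).

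Assuming $y\ne 0$ (the case $y=0$ being trivial), I would expand, for arbitrary $t\in\mathbb C$,
$$
0 \le \langle x - ty,\, x - ty\rangle = \|x\|_2^2 \;-\; t\,\overline{\langle x,y\rangle} \;-\; \bar t\,\langle x,y\rangle \;+\; |t|^2\,\|y\|_2^2,
$$
and optimize by setting $t = \langle x,y\rangle/\|y\|_2^2$. A short substitution produces $|\langle x,y\rangle|^2 \le \|x\|_2^2\,\|y\|_2^2$, which is the Cauchy--Schwarz inequality.

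For the equality case, assume $\tau(xy^*) = \|x\|_2\|y\|_2$. Then $\langle x,y\rangle$ is real and non-negative, and the Cauchy--Schwarz bound is saturated, so the displayed quantity above vanishes at $t = \|x\|_2/\|y\|_2 \ge 0$. Faithfulness of $\tau$ on $\a$ (and on $\wa$ in the semi-finite case) then yields $\tau((x-ty)(x-ty)^*) = 0 \Rightarrow x - ty = 0$, giving $x = \lambda y$ with $\lambda = t \ge 0$.

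The only substantive point in the argument is the faithfulness implication $\tau(zz^*)=0\Rightarrow z=0$; this holds in every setting enumerated in the paper (matrices, compact operators with the canonical trace, finite or semi-finite von Neumann algebras with a faithful normal trace, and the algebra $\wa$ of $\tau$-measurable operators), so no additional work is needed to cover all the cases considered.
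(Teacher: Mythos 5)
Your argument is correct, and it is worth noting that the paper does not actually prove Lemma \ref{cs} at all: it simply cites Proposition 2.1.3 of Kadison--Ringrose. Your quadratic-expansion proof (positivity of $t\mapsto\langle x-ty,x-ty\rangle$, optimization at $t=\langle x,y\rangle/\|y\|_2^2$, and then, in the equality case, vanishing at $t=\|x\|_2/\|y\|_2$ followed by faithfulness of $\tau$ to conclude $x-ty=0$) is the standard one and is essentially what the cited source does; its value here is that it makes the note self-contained and isolates precisely the two ingredients used later, namely positivity of the form $\tau(zz^*)$ and faithfulness. Two small points you should tidy up. First, the case $y=0$ is not quite ``trivial'' for the equality assertion: if $y=0$ and $x\ne 0$ then $\tau(xy^*)=0=\|x\|_2\|y\|_2$ but $x\ne\lambda y$; this is really a (harmless) defect of the statement as written, and in the paper's application both elements are nonzero, but you should either assume $y\ne 0$ in the equality clause or note that one must then also have $x=0$ by taking $t$ with $\langle x-ty,x-ty\rangle=0$ impossible -- in short, state the hypothesis explicitly rather than waving at it. Second, in the semi-finite (or $\tau$-measurable) setting you implicitly use that $\tau(xy^*)$ is well defined and finite when $\|x\|_2,\|y\|_2<\infty$; a one-line justification via the polarization identity $4\,xy^*=\sum_{k=0}^{3} i^k (x+i^k y)(x+i^k y)^*$, each summand being positive with finite trace, closes that gap. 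With these remarks your proof covers all the settings enumerated in the introduction.
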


\begin{rem}\label{posi}
Let $a=u|a|$ be the polar decomposition of $a\in\a$, then $u^*u|a|=|a|$. Write $a=xy^*$ with $x=u|a|^{1/2}$ and $y=|a|^{1/2}$, and use Cauchy-Schwarz to obtain $|\tau(a)|\le \tau(|a|)$ with (finite) equality $\tau(a)=\tau|a|=1$ if and only if $a=|a|\ge 0$.
\end{rem}

\section{H\"older}

We are now ready to consider the case of equality in H\"older's inequality.

\begin{teo}\label{eqholder}
If $p>1$ and $a,b\ne 0$ are in $\a$, equality holds in H\"older inequality
$$
\|ab^*\|_1=\|a\|_p\|b\|_q<\infty
$$
if and only if $\displaystyle\frac{|a|^p}{\|a\|_p^p}=\frac{|b|^q}{\|b\|_q^q}$.
\end{teo}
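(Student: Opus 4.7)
The plan is to reduce the equality case of Hölder to that of Cauchy--Schwarz (Lemma~\ref{cs}) by means of polar decompositions and a single splitting $|a|=|a|^{p/2}\cdot|a|^{1-p/2}$; Hölder's inequality itself (but \emph{not} its equality case, which is what we are trying to prove) is used only as an auxiliary bound. Because $\|ab^*\|_1=\||a||b|\|_1$ when $a=u|a|$, $b=v|b|$ (the partial isometries $u$, $v^*$ are absorbed under the trace, cf.\ Remark~\ref{posi}), one may assume $a,b\ge 0$; by homogeneity, normalize to $\|a\|_p=\|b\|_q=1$, so the hypothesis becomes $\|ab\|_1=1$. I treat $1<p<2$; the range $p>2$ is symmetric via $\|ab\|_1=\|ba\|_1$ and exchanging the roles of $a$ and $b$.

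Using the polar decomposition $ab=w|ab|$ and cyclicity of $\tau$,
\[
1=\tau(w^*ab)=\tau\bigl(a^{p/2}\cdot a^{1-p/2}bw^*\bigr).
\]
Apply Lemma~\ref{cs} with $x=a^{p/2}$ and $y=wba^{1-p/2}$, so that $\tau(xy^*)=1$ and $\|x\|_2=\tau(a^p)^{1/2}=1$:
\[
1\;\le\;\|x\|_2\|y\|_2\;=\;\|wba^{1-p/2}\|_2.
\]
Since $P:=w^*w$ is a projection, $\|wba^{1-p/2}\|_2^{2}=\tau(bPb\cdot a^{2-p})\le\tau(b^2 a^{2-p})$, and the standard Hölder inequality applied to the conjugate pair $(p/(2-p),\,q/2)$ -- both exponents exceeding $1$ when $1<p<2$ -- yields
\[
\tau(a^{2-p}b^2)\;\le\;\tau(a^p)^{(2-p)/p}\tau(b^q)^{2/q}\;=\;1.
\]
Hence every inequality in the chain is an equality.

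From $\tau\bigl((b^2-bPb)a^{2-p}\bigr)=0$ together with positivity of both factors one obtains $bPb=b^2$, equivalently $Pb=b$. The equality case of Lemma~\ref{cs} forces $a^{p/2}=\lambda\cdot wba^{1-p/2}$ for some $\lambda\ge 0$; cancelling $a^{1-p/2}$ on the right (on the support of $a$, via spectral truncation if needed) gives $a^{p-1}=\lambda wb$. Since $a^{p-1}\ge 0$ and $\lambda>0$ (otherwise $a=0$), the operator $wb$ is positive, hence $(wb)^2=(wb)^*(wb)=bPb=b^2$, and the positive square root gives $wb=b$. Therefore $a^{p-1}=\lambda b$; raising both sides to the $q$-th power and tracing, $1=\tau(a^{q(p-1)})=\tau(a^p)=\lambda^q\tau(b^q)=\lambda^q$, so $\lambda=1$ and $a^{p-1}=b$, equivalently $a^p=b^q$. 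The main technical obstacles are the careful functional-calculus handling of the possibly unbounded power $a^{1-p/2}$ in the semi-finite setting (standard spectral truncation) and the bookkeeping of the partial isometry $w$ -- in particular, that the equality $bPb=b^2$ is exactly what guarantees $wb=b$ and hence a clean extraction of the scalar relation $a^p=b^q$.
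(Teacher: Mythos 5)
Your first half is essentially the paper's argument (the same splitting $|a|=|a|^{p/2}\cdot|a|^{1-p/2}$, Cauchy--Schwarz on $\tau(xy^*)$, and an auxiliary H\"older step with the conjugate pair built from $q/2$), but the endgame has a genuine gap. From $\tau\bigl((b^2-bPb)a^{2-p}\bigr)=0$ with both factors positive you may conclude $b(1-P)b\,a^{2-p}=0$, hence $(1-P)b\,p_a=0$ where $p_a$ is the support projection of $a$ --- but \emph{not} $Pb=b$. That stronger statement is equivalent to $\mathrm{supp}(b)\le \mathrm{supp}(a)$ (up to the range of $ba$), which is part of what must be proved, not something you may assume. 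The same compression haunts the next step: cancelling $a^{1-p/2}$ on the right of $a^{p/2}=\lambda\,wba^{1-p/2}$ gives $a^{p-1}=\lambda\,wb\,p_a$, not $a^{p-1}=\lambda\,wb$. What you then actually know is that $wb\,p_a\ge 0$ with $(wbp_a)^*(wbp_a)=p_a bPb\,p_a=p_ab^2p_a$, so $a^{2(p-1)}=\lambda^2\,p_ab^2p_a$; since $(p_ab^2p_a)^{q/2}\ne p_ab^qp_a$ in general, you cannot raise to the $q$-th power and trace to extract $a^p=\lambda^q b^q$. So the chain $wb\ge 0\Rightarrow (wb)^2=b^2\Rightarrow wb=b\Rightarrow a^{p-1}=\lambda b$ does not go through as written.

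The missing ingredient is precisely the one link of your equality chain that you never exploit: the saturation of the auxiliary H\"older bound, $\tau(b^2a^{2-p})=1=\|b^2a^{2-p}\|_1$. By Remark~\ref{posi}, a trace equal to the trace norm forces $b^2a^{2-p}\ge 0$, and a product of two positive operators is positive only if they commute; hence $a$ and $b$ commute, $ab\ge 0$, and $w$ acts as the identity where it matters. Commutativity is what makes the powers and supports behave: it turns $a^{p-1}=\lambda\, b\,p_a$ into $a^p=\lambda^q b^q p_a$, and the final trace computation $\tau\bigl((1-p_a)b^q\bigr)=\tau(b^q)-\tau(a^p)=0$ together with faithfulness kills the part of $b$ outside $\mathrm{supp}(a)$. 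You need to restore both of these steps. (Two minor points: $p=2$ falls through the crack between your cases $1<p<2$ and $p>2$ and needs a one-line separate treatment, and the determination $\lambda=1$ is cleanest via $1=\tau(a^p)=\lambda\,\tau(ab)=\lambda$ rather than via $q$-th powers.)
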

\begin{proof}
Let $b=\nu|b|$ be the polar decomposition of $b$, then $|ab^*|=\nu||a||b||\nu^*$ and $\nu^*|ab^*|\nu=||a||b||$ therefore $\|ab^*\|_1=\||a||b|\|_1$ for all $a,b$. We will write $p_a$ for the projection onto the range of $a$. Using the homogeneity, it suffices to consider the case $\|a\|_p=\|b\|_q=1$. 

If $|a|^p=|b|^q$, then $\|a\|_p^p=\|b\|_q^q$, $\|a\|_p\|b\|_q=\|a\|_p^p$ and $|a||b|=|a|^p$. Hence
$$
\|ab^*\|_1=\||a||b|\|_1=\||a|^p\|_1= \|a\|_p^p=\|a\|_p\|b\|_q.
$$
To prove the converse, write $||a||b||=w^*|a||b|$. Without loss of generality we can assume that $p\in (1,2]$ (if $p=2$, then $|a|^0$ denotes $p_a$ and this proof can be considerably shortened). Then
$$
1=\|ab^*\|_1=\||a||b|\|_1=\tau(w^*|a||b|)=\tau(w^*|a|^{p/2} \, |a|^{1-p/2}|b|)=\tau(xy^*)
$$
with $x=w^*|a|^{p/2}, y=|b||a|^{1-p/2}$, which by the Cauchy-Schwarz inequality is less or equal than
$$
\|x\|_2\|y\|_2\le \tau(|a|^p)^{1/2} \||b||a|^{1-p/2}\|_2= \||b||a|^{1-p/2}\|_2=\tau(|b|^2 |a|^{2-p})^{1/2}
$$
since $\|a\|_p=1$. Now, pick $r=q/2\ge 1$, $r'$ its conjugate exponent, then by H\"older's inequality, 
$$
\tau(|b|^2 |a|^{2-p})\le \||b|^2|a|^{2-p}\|_1\le (\tau|b|^q)^{1/r} (\tau (|a|^{(2-p)r'}))^{1/r'}=(\tau(|a|^{(2-p)r'}))^{1/r'}
$$
since $\|b\|_q=1$. But $(2-p)r'=p$, hence the expresssion is less or equal than $1$, thus all the expressions are equal. Now, note first that
$$
1=\tau(|b|^2 |a|^{2-p})= \||b|^2|a|^{2-p}\|_1 =\tau(||b|^2|a|^{2-p}|)
$$
and this is only possible (Remark \ref{posi}) if $|b|^2|a|^{2-p}\ge 0$, which can only happen if $|b|^2$ commutes with $|a|^{2-p}$, or equivalently, if $|a|$ commutes with $|b|$; in particular $w=1$ and $\tau(|a||b|)=\|ab^*\|_1=1$.

On the other hand we have also shown that $0\le \tau(xy^*)=\|x\|_2\|y\|_2$ and by  Lemma \ref{cs}, it follows that $x=\lambda y$ for some $\lambda\ge 0$, in our case
$|a|^{p/2}=\lambda |b||a|^{1-p/2}$ which implies $|a|^p=\lambda |b||a|$. Taking traces we get
$$
1=\|a\|_p^p=\lambda \tau(|a||b|)=\lambda \|ab^*\|_1=\lambda.
$$
Then $\lambda=1$ and we can also assert that $|a|^{p-1}=|b|p_a$. But then $|b|^q p_a=p_a|b|^q=|a|^{q(p-1)}=|a|^p$ and
$$
\tau(p_a|b|^q)=\tau(|a|^p)=1=\tau(|b|^q),
$$
or equivalently $\tau((1-p_a)|b|^q)=0$, which is only possible if $|b|^q=p_a|b|^q=|a|^p$ by the faithfulness of the trace.
\end{proof}

\begin{rem}
For the case of $p=1$, assume $\|ab^*\|_1=\|a\|_1\|b\|_\infty$. If one goes through the previous proof (take $r=\infty$, $r'=1$), arrives to $p_a\|b\|_\infty=p_a|b|=|b|p_a$, which is the necessary and sufficent condition to obtain the equality just mentioned (note that $p_a$ is then, in the $L^2(M,\tau)$ representation of $\a$, a norming eigenvector of $|b|$).
\end{rem}

\subsection*{Acknowledgements}

This research was supported by Instituto Argentino de Matem\'atica ''Alberto P. Calder\'on'', Universidad Nacional de General Sarmiento, Universidad de Buenos Aires, CONICET (PIP 2010-0757) and ANPCyT (PICT 2010-2478).

\end{document}